\numberwithin{equation}{section}
\title{Higher index Fano varieties with finitely many birational automorphisms}
\author{Nathan Chen and David Stapleton}
\definecolor{mycolor}{RGB}{146, 214, 203}
\definecolor{myothercolor}{RGB}{179, 215, 232}
\newtheorem{theorem}{Theorem}
\newtheorem{proposition}[theorem]{Proposition}
\numberwithin{theorem}{section}
\newtheorem*{thmA}{Theorem A}
\newtheorem*{thmB}{Theorem B}
\newtheorem*{corC}{Corollary C}
\theoremstyle{definition}
\newcommand{\hr}[2]{\hyperref[#1]{#2}}
\theoremstyle{definition}
\newtheorem{remark}[theorem]{Remark}
\newtheorem*{question*}{Question}
\newtheorem{example}[theorem]{Example}
\newtheorem{definition}[theorem]{Definition}
\def\CC{{\mathbb C}}
\def\LL{{\mathbb L}}
\def\PP{{\mathbb{P}}}
\def\Pic{{\mathrm{Pic}}}
\def\Cl{{\mathrm{Cl}}}
\def\Spec{{\mathrm{Spec}}}
\def\Oc{{\mathcal{O}}}
\def\Lc{{\mathcal{L}}}
\def\xs{{x_1,\ldots,x_n}}
\def\Bir{{\mathrm{Bir}}}
\def\id{{\mathrm{id}}}
\def\mut{{\tilde{\mu}}}
\def\dra{{\dashrightarrow}}
\def\ra{{\rightarrow}}
\def\Xt{{\widetilde{X}}}
\def\gt{{\tilde{g}}}
\def\Lt{{\widetilde{\LL}}}
\def\cl{{\colon}}
\def\Hom{{\mathrm{Hom}}}
\def\PGL{{\mathrm{PGL}}}
\def\GL{{\mathrm{GL}}}
\def\Aut{{\mathrm{Aut}}}
\def\BirPic{{\mathrm{Pic}_{\Bir(X)}}}
\newcommand*{\sheafTor}{\mathcal{T}\kern -.5pt or}
\newcommand*{\Extc}{\mathcal{E}\kern -.5pt xt}
\pgfplotsset{compat=1.15}
\newcommand{\mybigwedge}{\raisebox{.25ex}{\scalebox{0.86}{$\bigwedge$}}}
\thanks{The first author's research was partially supported by an NSF postdoctoral fellowship, DMS-2103099. The second author's research is partially supported by the NSF FRG grant number 1952399.}
\begin{document}
\maketitle

\thispagestyle{empty}

A famous problem in birational geometry is to determine when the birational automorphism group of a Fano variety is finite. The Noether-Fano method has been the main approach to this problem. The purpose of this paper is to give a new approach to the problem by showing that there are Fano varieties in all positive characteristics that carry an ample and birationally equivariant line bundle. As a consequence these Fano varieties have finite birational automorphism groups.

Recall that a \textit{Fano variety} $X$ is a variety with mild (at worst klt) singularities such that $-K_X$ is ample. The \textit{index}\footnote{One could alternatively define the Fano index by only considering Cartier divisors. At least when $n\ge 4$, our varieties are factorial so the definitions are equivalent.} of $X$ is the largest number $r$ such that $-K_X \equiv rH$ for an ample Weil divisor $H$. Iskovskikh and Manin \cite{IskMan71} showed that the birational automorphism group, $\Bir(X)$, of a smooth quartic threefold is finite, which implied that a smooth quartic threefold is not rational. Given a rational map of Fano varieties, their approach relied on a detailed study of singularities of divisors in the corresponding linear series. This approach is now referred to as the \textit{Noether-Fano} method (using ideas of Fano \cite{Fano1908,Fano1915}). An immense amount of work has been devoted to proving the birational rigidity, and thus finiteness, of $\Bir(X)$ for other Fano varieties of index one. From the contributions of many authors (cf. \cite{Cheltsov2000,Corti95,deFernex13,deFernex16Erratum,dFEM03,Pukhlikov87,Pukhlikov02}) we now know that in characteristic zero any smooth Fano hypersurface $X \subset \CC\PP^{n+1}$ of degree $n+1$ is superrigid. As a consequence, these index one Fano hypersurfaces have finite birational automorphism groups. For index two Fano varieties, Pukhlikov has a number of results on finiteness of birational automorphisms \cite{Pukhlikov10, Pukhlikov16, Pukhlikov20, Pukhlikov21}. These results classify rationally connected fibrations of these varieties with base of dimension one. Little is known about $\Bir(X)$ for higher index Fano varieties. Here we give the first examples of higher index Fano varieties with finite birational automorphism groups:

\begin{thmA}\label{thm:A}
For every characteristic $p>0$ there are Fano varieties of arbitrarily large index over a field of characteristic $p$ with trivial birational automorphism groups.
\end{thmA}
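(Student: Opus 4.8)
The plan is to isolate a single robust mechanism — an ample line bundle preserved by every birational self-map — and then to realize that mechanism geometrically using a positivity of differential forms that is available only in characteristic $p$.

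First I would record the purely formal reduction. Suppose $X$ is a normal projective variety carrying an ample Weil divisor class $H$ that is \emph{birationally equivariant}, in the sense that every $\phi \in \Bir(X)$ comes with a compatible isomorphism $\phi^\ast H \cong H$. Since such a $\phi$ is an isomorphism in codimension one off its indeterminacy locus, pullback of sections then identifies the graded section rings and produces a graded automorphism of $R(X,H) = \bigoplus_{m\ge 0} H^0(X, mH)$. Because $H$ is ample, $X \cong \Proj R(X,H)$, so $\phi$ is in fact biregular; hence $\Bir(X) = \Aut(X)$. The subgroup of $\Aut(X)$ fixing the class $H$ embeds, via $|mH|$ for $m \gg 0$, into $\PGL$ of a fixed projective space, and is therefore a group scheme of finite type; I would arrange the construction so that for a general member this group is trivial. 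Everything is thus reduced to producing a Fano $X$ of large index that carries such an $H$.

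The crucial point is the origin of the equivariant ample class. I would take $H$ to be a canonical saturated rank-one subsheaf of a reflexive symmetric power of differentials, $H \hookrightarrow \mathrm{Sym}^{[m]}\Omega^{[1]}_X$, singled out intrinsically (for instance as the maximal positive, or the image, sub-line-bundle). Because reflexive differential forms and their symmetric powers are birational invariants of mildly singular varieties — their global sections extend across the indeterminacy loci — any $\phi \in \Bir(X)$ acts on the $H^0$ of these sheaves and must carry this distinguished sub-line-bundle to itself, furnishing exactly the isomorphisms $\phi^\ast H \cong H$ demanded above. The whole difficulty is thereby transferred to a positivity question: I must build a Fano variety, so with $-K_X$ ample, that nonetheless carries an \emph{ample} sub-line-bundle of its symmetric differentials, while keeping the index $r$ in $-K_X \equiv rH$ as large as desired.

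This is where characteristic $p$ is indispensable and where the main obstacle lies. On a Fano variety the cotangent sheaf is mostly negative, so in characteristic zero no ample sub-line-bundle of the differentials can exist; the construction must exploit Frobenius. I would realize $X$ as a suitable (weighted) hypersurface, or a purely inseparable cover, whose defining equation is assembled from $p$-th powers, so that $df$, after the $p$-th-power factors are cleared, yields enough twisted one-forms to force a positive sub-line-bundle of $\mathrm{Sym}^{[m]}\Omega^{[1]}_X$; tuning the degree against the dimension keeps $-K_X$ ample with arbitrarily large index. The delicate bookkeeping is then: verifying that $X$ has at worst klt, and for $n\ge 4$ factorial, singularities so that the index and the reflexive-differential formalism behave correctly; confirming that the distinguished sub-line-bundle is genuinely ample rather than merely pseudoeffective; and finally checking that $\Aut(X)$ — now known to equal $\Bir(X)$ — is trivial for a sufficiently general member of the family. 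Reconciling ampleness of the differential sub-bundle with the Fano condition is the single hardest step, and it is precisely the step with no characteristic-zero analogue.
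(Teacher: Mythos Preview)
Your high-level architecture --- find an ample line bundle that is canonically a sub-line-bundle of the differentials, hence preserved by every birational self-map, and realize it via a characteristic-$p$ inseparable construction --- is exactly the paper's. But two specific choices in your outline do not match what actually works and would have to be corrected before the argument runs.

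First, the relevant tensor construction is $\mybigwedge^{n-1}\Omega$, not $\mathrm{Sym}^{m}\Omega^{1}$. Koll\'ar's mechanism, on which the paper rests, produces an injection $\mut^{*}(\omega_X\otimes\Lc^{p})\hookrightarrow \mybigwedge^{n-1}\Omega_Z$ on a resolution $Z$ of a $p$-cyclic cover $\mu\cl Y\to X$; the paper's Theorem~B is precisely the computation that this sub-line-bundle equals the image of the evaluation map on global $(n-1)$-forms, whence it is birationally equivariant by Proposition~\ref{globgenbir}. I know of no analogous source of an ample sub-line-bundle of \emph{symmetric} differentials on these Fano varieties, and your proposal does not supply one. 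Note too that on the smooth model $Z$ the bundle is only globally generated (it is a pullback from $X$); ampleness appears only after descending to the singular $Y$ via Theorem~\ref{PropertiesBirThm}(3), so your plan to work entirely on one smooth Fano model will not quite fit.

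Second, you intend to conclude by arguing directly that $\Aut$ of the Fano is trivial for a general member. The paper avoids this by exploiting the two-storey structure: since the equivariant line bundle on $Y$ is $\mu^{*}$ of a very ample bundle on the base hypersurface $X$ and $\mu$ is purely inseparable, Theorem~\ref{PropertiesBirThm}(6) gives an injection $\Bir(Y)\hookrightarrow \Aut(X)$, and then Matsumura--Monsky makes $\Aut(X)$ trivial for a general hypersurface of degree $\ge 3$. Without this reduction you would need an independent proof that $\Aut(Y)=\{1\}$, which is not obvious and which you have not sketched.
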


In positive characteristic, Koll\'{a}r \cite{Kollar95} observed that certain Fano varieties that are $p$-cyclic covers in characteristic $p$ carry global differential forms, and proved that a very general hypersurface $X \subset \CC\PP^{n+1}$ of degree at least $2\lceil (n+3)/3 \rceil$ is not rational. These forms have also been used to show that Fano hypersurfaces of high degree are far from being rational in other ways. For example, Totaro \cite{Totaro} used these forms to prove that hypersurfaces in a slightly larger range are not even stably rational. Using unramified cohomology as an obstruction, Schreieder \cite{Schreieder19} improved these results and showed that a very general hypersurface of degree $d\ge \log_2(n)+2$ is not stably rational. The arguments of Totaro and Schreieder both involve the specialization property of decomposition of the diagonal developed by Colliot-Th\'el\`ene and Voisin. In other degree ranges, by studying the positivity properties of these forms in more detail, the authors demonstrated that the degrees of irrationality of complex Fano hypersurfaces can be arbitrarily large, and -- in a different range -- the degrees of possible rational endomorphisms on complex Fano hypersurfaces  must satisfy certain congruence conditions (see (\cite{FanoIrrat,RatEnd}).

We work with the $p$-cyclic covers that Koll\'ar used:
\[
\mu\cl Y\ra X.
\]
They have mild (terminal) isolated singularities and admit a straightforward resolution of singularities:
\[
\sigma\cl Z \ra Y.
\]
An important step in proving Theorem A is the computation of the space of global $(n-1)$-forms on $Z$. In doing so, we show that the only global $(n-1)$-forms are the forms that Koll\'ar found.

\begin{thmB}
Let $k$ be an algebraically closed field of characteristic $p>0$. Let $n\ge 3$ (if p=2 then assume $n$ is even). Let $X\subset \PP^{n+1}_k$ be a smooth degree $e$ hypersurface and fix $d>0$. If $Y$ is a $p$-cyclic cover over a general section of $\Oc_X(pd)$ with resolution $Z$ and
\[
(p-1)d\le n-e\le pd-3,
\]
then $H^0(Z,\mybigwedge^{n-1}\Omega_Z) \cong H^0(X,\omega_X(pd))$.
\end{thmB}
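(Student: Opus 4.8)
The plan is to exploit that $\mu\cl Y\ra X$ is a purely inseparable degree-$p$ cover, so that $\mu_*\Oc_Y=\bigoplus_{i=0}^{p-1}\Oc_X(-id)$ and, since $w^p=\mu^*f$, the relation $d(w^p)=0$ kills the pulled-back differential of $f$. Concretely I would derive the four-term complex
\[0\to\Kc\to\mu^*\Omega^1_X\to\Omega^1_Y\to\Oc_Y(d)\to0,\]
where $\Oc_Y(d)\cong\Omega^1_{Y/X}$ is generated by $dw$ and $\Kc\cong\Oc_Y(-pd)$ is the image of $\mu^*(df)$. Writing $\Ec$ for the middle image $\mu^*\Omega^1_X/\Kc$ (which is pulled back, $\Ec=\mu^*\Ec_X$ for $\Ec_X=\Omega^1_X/\Oc_X(-pd)$), a determinant computation gives $\det\Ec\cong\mu^*\bigl(\omega_X(pd)\bigr)$. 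Exterior powers of the extension $0\to\Ec\to\Omega^1_Y\to\Oc_Y(d)\to0$ then filter $(n-1)$-forms:
\[0\to\det\Ec\to\mybigwedge^{n-1}\Omega^1_Y\to\mybigwedge^{n-2}\Ec\otimes\Oc_Y(d)\to0.\]
Since $Y$ has isolated singularities (codimension $n\ge 3$), a form on $Z$ restricts to a reflexive form on $Y$, giving an inclusion $H^0(Z,\Omega^{n-1}_Z)\hookrightarrow H^0(Y^\circ,\Omega^{n-1})=H^0(Y,\Omega^{[n-1]}_Y)$ whose image consists of the forms extending across the exceptional locus; so I would work on the smooth locus $Y^\circ$ throughout.

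For the inclusion $H^0(X,\omega_X(pd))\hookrightarrow H^0(Z,\Omega^{n-1}_Z)$ I would analyze the first graded piece. By the projection formula $H^0(Y,\det\Ec)=\bigoplus_{i=0}^{p-1}H^0\bigl(X,\omega_X((p-i)d)\bigr)$, and since $\omega_X(md)\cong\Oc_X\bigl(md-(n-e)-2\bigr)$, the hypothesis $(p-1)d\le n-e$ forces every summand with $i\ge 1$ to vanish, leaving exactly $H^0(X,\omega_X(pd))$. These are Koll\'ar's forms, which extend across the resolution, recovering the lower bound.

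The hard direction is the reverse inclusion. Given a global $(n-1)$-form on $Z$, I would restrict it to $Y^\circ$ and project to the second graded piece $\mybigwedge^{n-2}\Ec\otimes\Oc_Y(d)$, aiming to show this projection vanishes, so the form lies in $\det\Ec$ and hence in $H^0(X,\omega_X(pd))$. Using $\mybigwedge^{n-2}\Ec\cong\det\Ec\otimes\Ec^\vee$ and the Koszul presentation $\Omega^{n-3}_X(-pd)\xrightarrow{\wedge df}\Omega^{n-2}_X\to\mybigwedge^{n-2}\Ec_X\to0$, I would reduce the cohomology of this piece, after pushing forward to $X$, to twisted groups $H^0(X,\Omega^{j}_X(k))$ and $H^1(X,\Omega^{j}_X(k))$ on the hypersurface. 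These are computable in every characteristic from the conormal and Euler sequences of $X\subset\PP^{n+1}$ together with Bott vanishing on $\PP^{n+1}$, crucially avoiding Kodaira--Akizuki--Nakano vanishing, which fails in characteristic $p$. The upper bound $n-e\le pd-3$ is the numerical condition making the relevant groups vanish and forcing the surviving quotient classes to fail to extend across the exceptional divisor of the explicit resolution $\sigma\cl Z\ra Y$ over the isolated terminal points. Combining the two inclusions yields the isomorphism.

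The main obstacle is precisely this second inclusion: controlling which sub-top forms extend from $Y^\circ$ across the exceptional locus of $Z$. Unlike top forms, where terminality guarantees extension, sub-top reflexive forms need not extend, so the argument must simultaneously (i) carry out the twisted-form cohomology on $X$ by explicit Koszul and Bott computations valid in characteristic $p$, and (ii) perform a local analysis of the resolution at the singular points where $w=0$ and $df$ vanishes. The two hypotheses are calibrated for exactly this: $(p-1)d\le n-e$ trivializes the extra eigenpieces of the principal term (and makes $Y$ Fano), while $n-e\le pd-3$ kills the quotient contribution, together pinning the space of forms down to $H^0(X,\omega_X(pd))$.
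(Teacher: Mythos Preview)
Your two–step filtration of $\mybigwedge^{n-1}\Omega$ coming from the surjection $\Omega^1_Y\twoheadrightarrow\Omega^1_{Y/X}$ is exactly the structure the paper uses, but the paper executes it on the resolution $Z$ rather than on $Y^\circ$, and this removes the very obstacle you flag as the main difficulty. From the relative cotangent sequence for $\Lt/X$ and the conormal sequence of $Z\subset\Lt$ one gets $0\to Q\to\Omega_Z\to B\to 0$ with $B$ torsion-free of rank one; away from the exceptional locus $Q$ is your $\Ec$ and $B$ is your $\Omega^1_{Y/X}$ (which, incidentally, is $\mu^*\Lc^{-1}$, not $\Oc_Y(d)$). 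Dualizing and twisting by $\omega_Z$ produces the analogue of your sequence, with sub $\det Q$ and quotient $Q^\vee\otimes\omega_Z$, and now there is no extension problem to solve.

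More importantly, you have the roles of the two inequalities reversed, and this makes your plan for the ``hard direction'' go astray. The quotient piece dies by a one-line tangent-bundle vanishing governed by the \emph{left} inequality $(p-1)d\le n-e$, not the right one. Since $Q^\vee\hookrightarrow\mut^*T_X|_Z$, one has $Q^\vee\otimes\omega_Z\hookrightarrow\mut^*T_X|_Z\otimes\omega_Z$; pushing forward by $\sigma$ (using terminality, so $\sigma_*\omega_Z=\omega_Y$) and then by $\mu$ gives
\[
H^0(Z,Q^\vee\otimes\omega_Z)\ \hookrightarrow\ \bigoplus_{i=0}^{p-1}H^0\bigl(X,\,T_X\otimes\omega_X\otimes\Lc^{p-1-i}\bigr)\ =\ \bigoplus_{i=0}^{p-1}H^0\bigl(X,\,T_X(e+(p-1-i)d-n-2)\bigr),
\]
and each summand vanishes via the Euler sequence as soon as $e+(p-1)d\le n$. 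Your own quotient $\mybigwedge^{n-2}\Ec\otimes\Omega^1_{Y/X}\cong\det\Ec\otimes\Ec^\vee\otimes\mu^*\Lc^{-1}$ embeds in $\mu^*T_X\otimes\omega_Y$ for the same reason, so this simple vanishing was available to you too; the Koszul/Bott computations and the local analysis at the singular points that you propose are unnecessary, and there are no ``surviving quotient classes'' to chase across the exceptional divisor.

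The right inequality $n-e\le pd-3$ plays a different and lighter role in the paper: it makes $\omega_X\otimes\Lc^p=\Oc_X(e+pd-n-2)$ globally generated, which is what lets one identify Koll\'ar's sub-line-bundle $\mut^*(\omega_X\otimes\Lc^p)|_Z\hookrightarrow\mybigwedge^{n-1}\Omega_Z$ with $\det Q$ up to an exceptional divisor and then match global sections. The reduction of $H^0(Y,\mu^*(\omega_X\otimes\Lc^p))$ to the single eigenpiece $H^0(X,\omega_X(pd))$ comes from the Fano condition $H^0(Y,\omega_Y)=0$, which is again a consequence of $(p-1)d\le n-e$.

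In short: right filtration, wrong diagnosis. The paper's proof is essentially your outline carried out on $Z$, with the quotient killed by the elementary embedding into $\mut^*T_X\otimes\omega_Z$ and the Euler sequence, rather than by the heavier machinery you anticipate.
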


\noindent The first inequality implies that $Y$ is Fano of index at least 2. The second inequality implies that $\omega_X(pd)$ is very ample.

We introduce the notion of \textit{birational equivariance} for line bundles, which arises naturally in this setting. We show that having a nontrivial birationally equivariant line bundle is very strong. Importantly, the global sections of a birationally equivariant line bundle $\Lc$ on $Y$ are naturally a representation of $\Bir(Y)$. This allows us to show:

\begin{corC}
In the setting of Theorem B,  $\mu^*(\omega_X(pd))$ is an ample and birationally equivariant line bundle on $Y$ and there is an injection $\Bir(Y)\cong \Aut(Y) \hookrightarrow \Aut(X)$.
\end{corC}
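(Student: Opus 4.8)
The plan is to derive the corollary from two facts: that $\omega_X(pd)$ is ample, and that Theorem B pins down the global $(n-1)$-forms. I would treat ampleness, birational equivariance, the passage from equivariance to biregularity, and the injection into $\Aut(X)$ in turn. Ampleness is immediate: by adjunction $\omega_X\cong\Oc_X(e-n-2)$, so $\omega_X(pd)\cong\Oc_X(pd+e-n-2)$, and the hypothesis $n-e\le pd-3$ forces the twist $pd+e-n-2\ge 1$, whence $\omega_X(pd)$ is very ample. As $\mu\cl Y\ra X$ is finite, the pullback $\mu^*(\omega_X(pd))$ of an ample bundle along a finite morphism is again ample.

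The crux, and the step I expect to be the main obstacle, is birational equivariance. The mechanism I would use is that global $(n-1)$-forms are a birational invariant. Since $\sigma\cl Z\ra Y$ is a resolution, $\Bir(Y)=\Bir(Z)$, and each $\phi\in\Bir(Z)$ induces an isomorphism of $H^0(Z,\mybigwedge^{n-1}\Omega_Z)$ with itself, functorially in $\phi$; this already makes the space of forms a representation of $\Bir(Y)$, which by Theorem B is $H^0(X,\omega_X(pd))$. To upgrade this to equivariance of the line bundle $\mu^*(\omega_X(pd))$ itself---rather than just of the subspace of forms inside its sections---I would use that $Y$ has Picard rank one (by Lefschetz $\Pic(X)=\ZZ\cdot\Oc_X(1)$, which the cover inherits), so that every divisor class on $Y$ is proportional to the ample generator and there is essentially a single ample direction for $\Bir(Y)$ to preserve. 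The delicate points are controlling the indeterminacy of $\phi$ on the terminal model $Y$ well enough to make $\phi^*$ act on $\Cl(Y)$, and then checking that the isomorphisms $\phi^*\mu^*(\omega_X(pd))\cong\mu^*(\omega_X(pd))$ satisfy the cocycle condition in the paper's definition; this is where the real content lies.

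Granting that $\Lc:=\mu^*(\omega_X(pd))$ is ample and birationally equivariant, finiteness follows from a general principle. Equivariance makes the section ring $R(\Lc)=\bigoplus_{m\ge 0}H^0(Y,\Lc^{\otimes m})$ a graded representation of $\Bir(Y)$, and ampleness identifies $Y\cong\Proj R(\Lc)$; hence each $\phi\in\Bir(Y)$ acts by a graded automorphism of $R(\Lc)$ and thus by a biregular automorphism of $Y$ agreeing with $\phi$. This gives $\Bir(Y)\cong\Aut(Y)$.

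For the injection $\Aut(Y)\hookrightarrow\Aut(X)$ I would return to the forms. The canonical action of $\Aut(Y)$ on $H^0(Z,\mybigwedge^{n-1}\Omega_Z)\cong H^0(X,\omega_X(pd))$ is linear, and since $\omega_X(pd)$ is very ample this is exactly the space cutting out $X\hookrightarrow\PP\big(H^0(X,\omega_X(pd))^\vee\big)$; the action preserves $X$ and so yields a homomorphism $\Aut(Y)\ra\Aut(X)$ compatible with $\mu$. Injectivity uses that $\mu$ is the purely inseparable cover $w^p=f$: an automorphism of $Y$ over $X$ must send $w$ to a root of $T^p-f=(T-w)^p$, and as $w$ is the unique such root it fixes $w$ and is the identity. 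Hence the homomorphism is injective, completing the chain $\Bir(Y)\cong\Aut(Y)\hookrightarrow\Aut(X)$.
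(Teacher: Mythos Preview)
Your ampleness argument, the passage from an ample equivariant bundle to $\Bir(Y)\cong\Aut(Y)$ via $\Proj$ of the section ring, and the injectivity over the purely inseparable cover are all fine and match what the paper does (the latter two are exactly Theorem~\ref{PropertiesBirThm}(6)--(7)).

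The gap is in the step you yourself flag as ``where the real content lies'': producing the birationally equivariant structure on $\mu^*(\omega_X(pd))$. Your proposed route---Picard rank one on $Y$ plus a hand-built cocycle---is not how the paper proceeds, and it is not clear it can be made to work. The paper's own Cremona example shows that on a Picard rank one variety the pullback along a birational self-map need not fix the generator (there $\tau^*\Oc(1)=\Oc(n)$), and that $g\mapsto g^*$ is only a map of sets, so knowing $\Pic(Y)\cong\ZZ$ does not by itself give $\phi^*\Lc\cong\Lc$, let alone compatible isomorphisms. You would need an extra argument, and you do not supply one.

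What you are missing is that Theorem~B is not merely a dimension count: combined with Koll\'ar's injection $\mut^*(\omega_X(pd))|_Z\hookrightarrow\mybigwedge^{n-1}\Omega_Z$, it identifies $\mut^*(\omega_X(pd))|_Z$ as precisely the image of the evaluation map
\[
H^0(Z,\mybigwedge^{n-1}\Omega_Z)\otimes\Oc_Z\ \longrightarrow\ \mybigwedge^{n-1}\Omega_Z.
\]
That is exactly the hypothesis of Proposition~\ref{globgenbir}(2), whose proof manufactures the isomorphisms $\phi_g$ from the derivative maps $\mybigwedge^{n-1}dg$ and gets the cocycle condition for free from the chain rule. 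Theorem~\ref{PropertiesBirThm}(3) then descends the equivariant structure from $Z$ to $Y$. No Picard computation is needed.
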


\noindent For an alternative perspective on these results, in \cite[V.5.20]{Kollar96} Koll\'ar views the map $Y\ra X$ as a birational invariant of $Y$. 

These theorems are proved in slightly greater generality and apply to other $p$-cyclic covers with appropriate hypotheses. The results lead us to ask the following question:

\begin{question*}
Can finiteness of birational automorphisms of Fano varieties in characteristic $p$ be used to prove that complex Fano varieties have finitely many birational automorphisms?
\end{question*}

\noindent \textbf{Outline and conventions:} We begin \S 1 by defining birational equivariance for line bundles $\Lc$ and
giving some properties and relevant examples. In \S 2, we describe the resolutions of certain $p$-cyclic covers and check that they have terminal singularities. Finally, in \S 3 we prove Theorem B by computing the $(n-1)$-forms on the cyclic covers, which leads to a proof of Theorem A and Corollary C.

Throughout we work over an algebraically closed field $k$. A variety is an integral $k$-scheme of finite type. We do not give the birational automorphism group any scheme structure.

\noindent{\textbf{Acknowledgments.}} We would like to thank J\'anos Koll\'ar, Mircea Musta\c{t}\u{a}, Alex Perry, Aleksandr Pukhlikov, and Burt Totaro for valuable comments and discussions.

\section{Birationally equivariant line bundles}

The goal of this section is to introduce the notion of a birationally equivariant line bundle, to give some examples, to state some basic properties, and explain how they can be used to study the birational automorphism group. We consider $\Bir(X)$ as an abstract group, without any scheme structure.

Let $k$ be an algebraically closed field, and let $X$ be a normal algebraic variety over $k$. By variety, we mean a projective integral scheme of finite type over $k$. Let
\[
f \cl X\dra X
\]
be a rational endomorphism. The map $f$ is defined on some open set $i\cl U\hookrightarrow X$ such that $X\setminus U$ has codimension 2 in $X$. To start we define the pullback of a line bundle along $f$.

\begin{definition}
Let $\Lc$ be a line bundle on $X$. The \textit{pullback of $\Lc$ along} $f$ is defined by
(1) first pulling back to a line bundle $f^*(\Lc)\in \Pic(U)$, and then (2) pushing forward $i_*(f^*(\Lc))$ to get a reflexive rank 1 sheaf. This gives a group homomorphism:
\[
f^{\ast} \cl \Pic(X) \rightarrow \Cl(X),
\]
(where we identify the divisor class group with the group of reflexive rank 1 sheaves with reflexive tensor product).
\end{definition}

\begin{definition}
We say that $\Lc$ is \textit{birationally equivariant} if for every $g\in \Bir(X)$ there is a choice of an isomorphism:
\[
\phi_g\cl g^*\Lc\ra \Lc.
\]
subject to the following compatibility condition: for all $g_1, g_2 \in \Bir(X)$,
\begin{center}
\begin{tikzcd}
(g_1\cdot g_2)^*(\Lc) \arrow[r, equals] &[-0.3in] g_2^*(g_1^*\Lc) \arrow[dr,swap,"\phi_{g_1 \cdot g_2}"]\arrow[r,"g_2^*(\phi_{g_1})"]&g_2^*(L).\arrow[d,"\phi_{g_2}"]\\
& &\Lc
\end{tikzcd}
\end{center}
\end{definition}

\begin{remark}
It also makes sense to talk about $G$-birationally equivariant line bundles for any subgroup $G\le \Bir(X)$, any group homomorphism $G\ra \Bir(X)$, as well as for vector bundles on $X$.
\end{remark}

\begin{example}
The \textit{Cremona involution} $\tau \cl \PP^n \dra \PP^n$ is defined by
\begin{align*}
\tau([x_0\cl \cdots \cl x_n])&=[1/x_0\cl \cdots \cl 1/x_n]\\
&=[x_1\cdots x_n:x_0 x_2\cdots x_n: \cdots : x_0\cdots x_{n-1}].
\end{align*}
The pullback $\tau^*\cl \Pic(\PP^n)\ra \Pic(\PP^n)$ is multiplication by $n$. However, $\tau^{\circ 2} = \id\in \Bir(\PP^n)$. Thus even if $\Pic(X) = \Cl(X)$, the map
\[
\Bir(X)\ra \Hom(\Pic(X),\Pic(X))
\]
is only a map of sets -- it does not always respect composition. This also shows that $\PP^n$ does not admit any nontrivial birationally equivariant line bundles when $n\ge 2$.
\end{example}

\begin{theorem}[Basic Properties of Birationally Equivariant Line Bundles]\label{PropertiesBirThm}\
\begin{enumerate}
\item If $\Lc_1$ and $\Lc_2$ are birationally equivariant line bundles on $X$, then so is $\Lc_1\otimes \Lc_2$. 
\item Likewise, the inverse of a birationally equivariant line bundle is naturally birationally equivariant. In particular,
\[
\BirPic(X):=\{\text{line bundles with birational equivariant structure}\}
\]
is a group under tensor product and the forgetful map
\[
\BirPic(X)\ra \Pic(X)
\]
is a group homomorphism with kernel equal to the group of 1-dimensional $k$-representations of $\Bir(X)$ under tensor product.
\item Let $\mu \cl \Xt \ra X$ be a proper birational morphism. If $\Lc$ is a line bundle on $X$ and $\mu^{\ast}\Lc$ has a birationally equivariant structure, then $\Lc$ is naturally birationally equivariant.
\item If $\Lc$ is a birationally equivariant line bundle on $X$ and $H^0(X,\Lc)\ne 0$, then there is a representation $\rho\cl \Bir(X)\ra \GL(H^0(X,\Lc)^\vee)$ such that the following diagram commutes:
\[
\begin{tikzcd}
X\arrow[r,dashed,"g"]\arrow[d,"\pi"]&X\arrow[d,"\pi"]\\
\PP(H^0(X,\Lc)^\vee)\arrow[r,"\rho(g)"]&\PP(H^0(X,\Lc)^\vee).
\end{tikzcd}
\]
\item In the setting of (4), let $Z\subset \PGL(H^0(\Lc)^\vee)$ be the closure of the image of $X$. For all $g\in \Bir(X)$, $\rho(g)$ restricts to an automorphism of $Z$, which induces a group homorphism:
\[
\Bir(X)\ra \Aut(Z)
\]
and the kernel consists of $g\in \Bir(X)$ such that $\pi\circ g = \pi.$
\item In the setting of (4), if there is a nonempty open set $U\subset X$ such that $\pi|_U$ is injective on the $k$-points of $U$ (e.g. if $\pi$ is birational or generically finite and purely inseparable), then the homomorphism $\Bir(X)\ra \Aut(Z)$ is injective.
\item If $X$ has an ample birationally equivariant line bundle, then $\Bir(X)\cong \Aut(X)$.
\end{enumerate}
\end{theorem}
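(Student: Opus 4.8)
The plan is to prove that the natural inclusion $\Aut(X)\hookrightarrow \Bir(X)$ is surjective: since it is always an injective group homomorphism, showing that every birational self-map of $X$ is in fact biregular will give the asserted isomorphism $\Bir(X)\cong \Aut(X)$. First I would reduce to the very ample case. As $\Lc$ is ample, $\Lc^{\otimes m}$ is very ample for $m\gg 0$, and by part (1) of Theorem~\ref{PropertiesBirThm} it inherits a birationally equivariant structure. Replacing $\Lc$ by this power, we may assume $\Lc$ itself is very ample and birationally equivariant.

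Next I would apply parts (4) and (5) of Theorem~\ref{PropertiesBirThm} to $\Lc$. Very ampleness gives $H^0(X,\Lc)\ne 0$, and the associated morphism
\[
\pi\cl X\ra \PP(H^0(X,\Lc)^\vee)
\]
is a closed immersion; in particular $\pi$ restricts to an isomorphism $X\xrightarrow{\sim} Z$ onto its image $Z=\pi(X)$, which is already closed. Part (4) produces a representation $\rho\cl \Bir(X)\ra \GL(H^0(X,\Lc)^\vee)$ whose induced projective transformations make the square of part (4) commute for every $g\in \Bir(X)$; concretely this is the identity of rational maps $\pi\circ g=\rho(g)\circ \pi$. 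By part (5), $\rho(g)$ preserves $Z$ and restricts to an automorphism $\rho(g)|_Z\in \Aut(Z)$.

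Finally I would recover $g$ by transport of structure. Because $\pi\cl X\to Z$ is an isomorphism of varieties, the identity $\pi\circ g=\rho(g)\circ \pi$ rearranges to
\[
g=\pi^{-1}\circ \rho(g)|_Z\circ \pi.
\]
The right-hand side is a composition of the isomorphism $\pi$, the automorphism $\rho(g)|_Z$ of $Z$, and the isomorphism $\pi^{-1}$, hence an automorphism of $X$; as it agrees with $g$ on the dense open locus where $g$ is defined, $g$ equals this biregular map. Thus every $g\in \Bir(X)$ lies in $\Aut(X)$, proving $\Bir(X)=\Aut(X)$.

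I expect the only genuine subtlety to be upgrading the commuting square from a birational to a biregular statement. The mechanism is that very ampleness of the power $\Lc^{\otimes m}$ turns $\pi$ from a mere rational map into a closed immersion, so that $\pi$ is invertible as a morphism onto $Z$; conjugating the linear automorphism $\rho(g)|_Z$ of $Z$ back through $\pi$ therefore produces an element of $\Aut(X)$ rather than only of $\Bir(X)$. A minor bookkeeping point is to fix one very ample power at the outset, so that a single representation $\rho$ is used throughout and no compatibility across different powers $m$ is required.
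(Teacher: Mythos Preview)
Your argument is correct and follows essentially the same route as the paper: pass to a very ample tensor power using (1), then use the commuting square from (4)--(5) to see that each $g\in\Bir(X)$ is conjugate via the closed immersion $\pi$ to a linear automorphism of $Z$. The only cosmetic difference is that the paper phrases the last step as ``apply (6)'' (very ampleness makes $\pi$ injective on $k$-points, so $\Bir(X)\hookrightarrow\Aut(Z)\cong\Aut(X)$), whereas you spell out the conjugation $g=\pi^{-1}\circ\rho(g)|_Z\circ\pi$ directly; these are the same mechanism.
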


\begin{proof}
For (1), to give the tensor product $\Lc_1\otimes \Lc_2$ a birationally equivariant structure, one may assign:
\[
\phi_{g} \cl g^*(\Lc_1\otimes \Lc_2) \cong g^{\ast}\Lc_1 \otimes g^{\ast}\Lc_2 \xrightarrow{(\phi_1)_{g}\otimes (\phi_2)_g} \Lc_1\otimes \Lc_2.
\]
Note that the first isomorphism is canonical. The compatibility condition is easy to check.

In (2), if $\Lc$ is birationally equivariant with isomorphisms $\phi_g$, then there are isomorphisms:
\[
\phi'_g = (\phi_g^\vee)^{-1}\cl g^*(\Lc^\vee)\ra \Lc^\vee.
\]
Compatibility is easy to check. It is clear that the map
\[
\BirPic(X)\ra \Pic(X)
\]
is a group homomorphism. The kernel is given by equivariant structures on the trivial line bundle. These give rise to 1-dimensional representations on $H^0(X,\Oc_X)$ which determine the birationally equivariant bundle up to isomorphism.

To prove (3), let $g\in \Bir(X)$ let $\gt\in \Bir(\Xt)$ be the corresponding birational automorphism. Assume that both $g$ and $\gt$ are defined away from codimension 2. Let $U\subset X$ be an open set so that (a) $X\setminus U$ has codimension at least 2 in $X$, (b) $\pi^{-1}$ is defined on $U$, (c) $g$ is defined on $U$, and (d) $\gt$ is defined on $\pi^{-1}(U)$. By assumption, there is an isomorphism:
\[
\phi_{\gt}|_{\mu^{-1}(U)} \cl (\gt^*\mu^*\Lc)|_{\mu^{-1}(U)} \ra (\mu^*\Lc)|_{\mu^{-1}(U)}.
\]
This gives an isomorphism:
\[
\phi_g|_U \cl g^*(\Lc)|_U\ra \Lc|_U
\]
which uniquely extends to an isomorphism:
\[
\phi_g \cl g^*(\Lc)\ra \Lc.
\]
Lastly, compatibility follows as it can be checked on any nonempty open set (such as $U$).

For (4), the isomorphisms $\phi_g$ give rise to isomorphisms of global sections:
\[
H^0(X,L)\xrightarrow{g^*}H^0(X,g^*\Lc)\xrightarrow{\phi_g} H^0(X,L).
\]
Let $\rho(g)^{\vee}$ denote the composition. The compatibility implies that the dual isomorphisms satisfy:
\[
\rho(g_1\cdot g_2)=\rho(g_1)\cdot \rho(g_2)\in \GL(H^0(X,L)^\vee).
\]
Commutativity of the diagram follows from the fact that for a general $x\in X$, the map $\rho(g)^\vee$ gives an isomorphism between sections of $H^0(X,\Lc)$ vanishing at $x$ and those vanishing at $g(x)$.

To prove (5), it suffices to observe that the matrix $\rho(g)$ preserves the closure of the image $\pi(X)$, which is clear from commutativity.

For (6), by the Nullstellensatz any birational automorphism $g\in \Bir(X)$ in the kernel of the map \[
\Bir(X)\ra \Aut(X)
\]
is the identity on $U$, thus $g=\id\in \Bir(X)$.

Part (7) is proved by taking a tensor power of $\Lc$ that is very ample and applying (6).
\end{proof}

Now we shift our focus to giving examples of birationally equivariant line bundles.

\begin{proposition}\label{globgenbir}
Let $X$ be a smooth projective variety.
\begin{enumerate}
\item If $\omega_X$ is a globally generated line bundle then it is birationally equivariant.
\item More generally, if the image of the evaluation map:
\[
H^0(X,\mybigwedge^i \Omega_X)\otimes_k \Oc_X \ra \mybigwedge^i \Omega_X
\]
is a line bundle $\Lc\subset \mybigwedge^i \Omega_X$ (which is necessarily globally generated), then $\Lc$ is birationally equivariant.
\end{enumerate}
\end{proposition}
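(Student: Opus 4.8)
The plan is to prove (2) directly and recover (1) as the special case $i=\dim X$: when $\omega_X$ is globally generated the evaluation map $H^0(\omega_X)\otimes\Oc_X\to\omega_X=\mybigwedge^{\dim X}\Omega_X$ is surjective, so its image is all of $\omega_X$ and the hypotheses of (2) hold with $\Lc=\omega_X$. So fix $g\in\Bir(X)$ and a dense open $U\subseteq X$ on which $g$ is a morphism with $X\setminus U$ of codimension at least two, and write $\Lc$ for the globally generated subsheaf of $\mybigwedge^i\Omega_X$ cut out by the evaluation map.

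The first step is to set up pullback on forms. The differential of $g|_U$ gives an injection of locally free sheaves $\mybigwedge^i d(g|_U)\cl (g|_U)^*\mybigwedge^i\Omega_X\hookrightarrow\mybigwedge^i\Omega_U$, which on a global generator $\omega$ of $\Lc$ sends $(g|_U)^*\omega$ to the restriction of the pulled-back rational form $g^*\omega$. The input I would invoke here is the classical birational invariance of global $i$-forms: for a birational morphism $p\cl\Xt\to X$ of smooth proper varieties, $p^*\cl H^0(X,\mybigwedge^i\Omega_X)\to H^0(\Xt,\mybigwedge^i\Omega_{\Xt})$ is an isomorphism, surjectivity coming from reflexive (Hartogs) extension across the codimension $\ge 2$ non-isomorphism locus, using smoothness of $X$. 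Resolving $g$ by $p,q\cl W\to X$ then shows $g^*=(p^*)^{-1}q^*$ maps $H^0(X,\mybigwedge^i\Omega_X)$ isomorphically to itself; in particular each $g^*\omega$ is again a global $i$-form, hence a section of $\Lc$ since $\Lc$ is the image of \emph{all} global forms. I should flag that this is the only place positive characteristic could intervene, but for $H^0$ of forms the reflexive-extension argument is characteristic-free.

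Next I would assemble these into a map of sheaves. As the images $(g|_U)^*\omega$ land in $\Lc|_U$ and generate $(g|_U)^*\Lc$, the differential restricts to an injection $(g|_U)^*\Lc\hookrightarrow\Lc|_U$; pushing forward across the codimension $\ge 2$ boundary, with both sheaves reflexive of rank one, yields an injection $\psi_g\cl g^*\Lc\hookrightarrow\Lc$. Since $\psi_g$ is generically an isomorphism it exhibits $g^*\Lc\cong\Lc(-\bar D)$ for some effective divisor $\bar D$, the ``Jacobian defect'' of $g$.

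The crux, and the step I expect to be the main obstacle, is showing $\bar D=0$, i.e.\ that $\psi_g$ is an isomorphism; this is exactly where global generation is used. Identifying $H^0(\Lc)=H^0(\mybigwedge^i\Omega_X)$ (every global form has its values in the evaluation image), the isomorphism $g^*$ on global sections factors through $H^0(g^*\Lc)=H^0(\Lc(-\bar D))=\{s\in H^0(\Lc):s|_{\bar D}=0\}$. Surjectivity of $g^*$ forces every global section of $\Lc$ to vanish along $\bar D$, which is impossible for a globally generated $\Lc$ unless $\bar D=0$. Hence $\psi_g=:\phi_g$ is an isomorphism $g^*\Lc\to\Lc$. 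Finally I would check the cocycle condition: since each $\phi_g$ is built from the differential $\mybigwedge^i d(g)$, compatibility reduces to the chain rule $d(g_1\circ g_2)=d(g_1)\circ d(g_2)$, which may be verified on any common open set where the relevant maps are defined. Part (1) is then the case $i=\dim X$, where global generation of $\omega_X$ gives $\Lc=\omega_X$ and the same argument applies verbatim.
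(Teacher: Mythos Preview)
Your argument is correct and follows essentially the same strategy as the paper's: use the derivative $\mybigwedge^i dg$ to map $g^*\Lc$ into $\mybigwedge^i\Omega_X$, invoke global generation to kill the defect divisor, and verify the cocycle identity via the chain rule. The paper argues the containment in the opposite direction (showing $\Lc\subset\phi_g(g^*\Lc)$ and ruling out a larger image via a fixed-component argument), but this is a cosmetic mirror of your $\bar D=0$ step.

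One technical wrinkle worth noting, since the paper works in positive characteristic: you prove that $g^*$ is an automorphism of $H^0(X,\mybigwedge^i\Omega_X)$ by resolving the graph of $g$ through a \emph{smooth} $W$, which is not known to exist in characteristic $p$. The paper sidesteps this by observing that $g^*$ followed by $\mybigwedge^i dg$ gives a chain of injections from the finite-dimensional space $H^0(X,\mybigwedge^i\Omega_X)$ to itself, hence all maps are isomorphisms by a dimension count. Your own Hartogs argument applied directly to $g$ and $g^{-1}$ (both defined outside codimension~$2$) would also patch this without any $W$.
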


\begin{proof}
Part (2) implies part (1), so we will just prove part (2). Let $g\in \Bir(X)$ and let $i_g \cl U_g\hookrightarrow X$ denote the inclusion of the open set on which $g$ is defined (so the complement $X\setminus U_g$ has codimension $\ge 2$).

The derivative map
\[
\mybigwedge^i dg\cl g^*(\mybigwedge^i \Omega_X) \ra \mybigwedge^i\Omega_{U_g}
\]
pushes forward to an inclusion
\[
i_{g*}(\mybigwedge^i dg)\cl i_{g*} g^*(\mybigwedge^i \Omega_X) \ra \mybigwedge^i\Omega_{X}.
\]
This gives an injection on global sections:
\[
H^0(X,i_{g*} g^*(\mybigwedge^i \Omega_X))\hookrightarrow H^0(X,\mybigwedge^i\Omega_{X}).
\]
Now $i_{g*} g^*(\mybigwedge^i \Omega_X)$ contains the rank one subsheaf $g^*\Lc$. Moreover, every global section of $\Lc$ pulls back to a global section of $g^*\Lc$. So we have a commuting diagram of inclusions:
\[
\begin{tikzcd}
H^0(X,\Lc)\arrow[r,hook,"g^*"]\arrow[d,equals]& H^0(X,g^*\Lc)\arrow[d,hook]&H^0(X,\Lc)\arrow[d,equals]\\
H^0(X,\mybigwedge^i \Omega_X)\arrow[r,hook,"g^*"]&H^0(X,i_{g*} g^*(\mybigwedge^i \Omega_X))\arrow[r,hook,"\mybigwedge^i dg"]&H^0(X,\mybigwedge^i\Omega_X)
\end{tikzcd}
\]
As the spaces on the left and the right are of the same dimension, and the maps are all inclusions it follows that every map is an isomorphism. Lastly, the commutative diagram of evaluation maps
\[
\begin{tikzcd}
H^0(X,g^*\Lc)\otimes_k\Oc_X\arrow[d]\arrow[r,"\cong"]&H^0(X,\mybigwedge^i\Omega_X)\otimes_k \Oc_X\arrow[d]\\
g^*\Lc\arrow[hook,r,"\phi_g"]& \mybigwedge^i\Omega_X
\end{tikzcd}
\]
shows that the image of $g^*\Lc$ in $\mybigwedge^i\Omega_X$ contains $\Lc$. It remains to show that the image of $\phi_g$ contains nothing more (i.e. it is an isomorphism).

Suppose for contradiction that there was a sequence:
\[
0\ra \Lc \ra \phi_{g}(g^*\Lc)\ra \Oc_\Delta(\Delta)\ra 0,
\]
for some effective divisor $\Delta\subset X$. As $H^0(X,\Lc) \cong H^0(X,g^*\Lc)$ it follows that the fixed component of the linear series $|g^*\Lc|$ equals $\Delta$. However, the sections in $H^0(X,g^*\Lc)$ globally generate $g^*\Lc$ on $U_g$, so there is no fixed component. Therefore $\phi_g$ defines an isomorphism between $g^*\Lc$ and $\Lc$.

To check equality of isomorphisms
\[
\phi_{g_2}\circ g_2^*(\phi_{g_1}) = \phi_{g_1\cdot g_2}
\]
it suffices to check on any nonempty open set (as global automorphisms of a line bundle on a projective variety are constant). This reduces to the chain rule:
\[
\mybigwedge^i dg_2\circ g_2^* (\mybigwedge^i dg_1) = \mybigwedge^i d(g_1 \cdot g_2)
\]
on an open set where everything is defined.
\end{proof}

\section{ \texorpdfstring{$p$}{\texttwoinferior}-cyclic covers and their resolutions}

The goal of this section is to define $p$-cyclic covers in characteristic $p$, present Koll\'ar's resolution (\cite[\S 21]{Kollar95}), and check that they have terminal singularities (by further passing to a log resolution, and computing discrepancies). Throughout we work over an algebraically closed field $k$ of characteristic $p>0$.

First we define cyclic covers. Fix a $k$-scheme $X$ together with a line bundle $\Lc$ on $X$. Let
\[
\LL=\Spec_{\Oc_X}\left(\bigoplus_{i\ge 0} \Lc^{-i}\cdot y^i\right)
\]
(resp. $\LL^{\otimes m}$) be the total space of the line bundle $\Lc$ (resp. $\Lc^{\otimes m}$). Let $s\in H^0(X,\Lc^{\otimes m})$ be a section, which corresponds to a map
\begin{center}
\begin{tikzcd}
\LL^{\otimes m}\arrow[r]&X.\arrow[l,bend left=40,"s"]
\end{tikzcd}
\end{center}
There is also an $m$th power map: $\LL\xrightarrow{p_m}\LL^{\otimes m}$ which is a $\mu_m$-quotient.

\begin{definition}
The \textit{$m$-cyclic cover of $s$} is defined as $Y := p_m^{-1}(s(X))$. We say that the cyclic cover $Y$ has \textit{branch divisor} $(s=0)\subset X$.
\end{definition}

\noindent It follows that $Y\cong \Spec_{\Oc_X}\left(\bigoplus_{i\ge 0} \Lc^{-i} \cdot y^i/(y^m-s)\right)$.

Let $X$ be a smooth projective $k$-variety with a line bundle $\Lc$ and let $s\in H^0(X,\Lc^{\otimes p})$ be a section. The $p$-cyclic cover $Y$ of $s$ is typically singular. However, if $s$ is general then Koll\'ar shows how to resolve these singularities. We say $s$ has \textit{non-degenerate critical points} (\cite[17.3]{Kollar96}) if when we locally describe $s$ as a function, any critical point of $s$ has a non-degenerate Hessian matrix (when the characteristic is 2 this forces the dimension to be even, we leave out the odd-dimensional case here). In this case, any critical point of $s$ gives rise to to an isolated hypersurface singularity on $Y$ of the form:
\[
y^p=f_2(\xs)+f_3,
\]
where $f_2$ and $f_3$ are functions on $X$, $f_3$ vanishes to order 3, and $f_2(\xs)$ is a quadratic polynomial with non-degenerate Hessian. Koll\'ar shows that these isolated singularities can be resolved by a sequence of blow-ups at points.

If $p=2$ then $Y$ is resolved after one blow-up of each singular point, and this is a log-resolution (the exceptional divisor over each point is given by the quadric:
\[
y^2-f_2(\xs)=0\subset \PP^n,
\]
which can be checked to be smooth).

If $p>2$ then Koll\'ar shows that a sequence of $(p-1)/2$ blow-ups of isolated double points resolves the singularities of $Y$. At the $i$th step, the new exceptional divisor over $Y$ is the quadric in the new exceptional divisor over $\LL$:
\[
f_2(\xs)=0\subset \PP^n,
\]
here $\PP^n$ has coordinates $[x_1:\cdots:x_n:y]$. This is smooth and simple normal crossing away from the point $[0:\cdots:0:1]$. (The only exceptional divisor it intersects is the one from the step before, and the intersection is given by $(y=0)\cap (f_2=0)\subset\PP^n$, which is smooth.) Here the strict transform of $Y$ has the new local equation:
\[
y^{p-2i}=f_2(\xs)+f_3.
\]
Thus, it is resolved after $(p-1)/2$ steps. To give a log resolution, we must blow-up one more time at $[0:\cdots:0:1]$ in the final exceptional divisor. The last exceptional divisor of $Y$ is a smooth projective space $\PP^{n-1}$ with coordinates $[x_1:\cdots:x_n]$ and the intersection of the last 2 exceptional divisors is again the quadric $f_2(\xs) = 0\subset \PP^{n-1}$.

Call this log-resolution $Z$. This gives a log resolution of $Y$ which fits into the diagram
\begin{center}
\begin{tikzcd}
Z \arrow[d,swap,"\sigma"] \arrow[r, phantom, "\subset"] &[-0.2in] \Lt \arrow[d, swap, "\pi"] \arrow[dr, bend left, "\mut"] \\
Y \arrow[r, phantom, "\subset"] & \LL \arrow[r, swap, "\mu"] & X
\end{tikzcd}
\end{center}

\begin{proposition}\label{prop:termSing}
Let $X$ be a smooth $k$-variety of dimension $n\ge 3$ with a line bundle $\Lc$. If
\[
s\in H^0(X,\Lc^{\otimes p})
\]
is a section with non-degenerate critical points then the $p$-cyclic cover of $s$ has terminal singularities.
\end{proposition}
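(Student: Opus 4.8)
The plan is to reduce to the explicit local model recorded above and then compute the discrepancies of Koll\'ar's log resolution $\sigma\cl Z\ra Y$ directly, invoking that terminality can be tested on a single log resolution.

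First I would pin down the singular locus. Writing $Y=(y^p-s=0)$ inside the total space $\LL$, in characteristic $p$ one has $\partial_y(y^p-s)=p\,y^{p-1}=0$, so the Jacobian criterion shows that $Y$ is singular exactly over the critical points of $s$; by nondegeneracy these are finitely many points, and since $y\mapsto y^p$ is a bijection each contributes a single singular point of $Y$. Passing to the formal local model and using $(y'+y_0)^p=y'^p+y_0^p$, each singularity becomes the isolated hypersurface singularity
\[
y^p=f_2(\xs)+f_3,\qquad f_2\ \text{nondegenerate},\ \ord(f_3)\ge 3,
\]
inside the smooth $(n+1)$-fold $\AA^{n+1}$. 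In particular $Y$ is a normal Gorenstein hypersurface (a local complete intersection whose singular locus has codimension $n\ge 3$), hence $\QQ$-Gorenstein, so that ``terminal'' is meaningful and ordinary adjunction applies.

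The computation itself I would carry out entirely in the ambient space. Let $\Pi\cl W_N\ra W_0=\AA^{n+1}$ be the composite of the point blow-ups underlying $\sigma$, with exceptional divisors $F_1,\dots,F_N$, and write
\[
K_{W_N}=\Pi^*K_{W_0}+\textstyle\sum_j\alpha_j F_j,\qquad \Pi^*Y=Y_N+\textstyle\sum_j\beta_j F_j,
\]
where $Y_N=Z$ is the strict transform. Adjunction on $Y_N$ then yields $K_{Y_N}=\sigma^*K_Y+\sum_j(\alpha_j-\beta_j)E_j$ with $E_j:=F_j\cap Y_N$, so that $a(E_j,Y)=\alpha_j-\beta_j$ and everything reduces to the two sequences $\alpha_j,\beta_j$. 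Here the decisive geometric input is that the $i$-th blow-up center $P_{i-1}=[0\cl\cdots\cl0\cl1]\in F_{i-1}$ (for $i\ge 2$) lies off the locus $(y=0)\supseteq F_{i-1}\cap F_{i-2}$, hence off every earlier exceptional divisor. This decouples the two recursions: only the immediately preceding divisor passes through $P_{i-1}$, giving $\alpha_i=n+\alpha_{i-1}=in$, while the $(p-1)/2$ resolving blow-ups occur at points of multiplicity $2$ on the strict transform, giving $\beta_i=2+\beta_{i-1}=2i$, and the final log-resolution blow-up occurs at a smooth point of the already-resolved strict transform, contributing $\beta_N=1+\beta_{N-1}=p$ with $N=(p+1)/2$.

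Assembling these, the exceptional divisors of $\sigma$ carry discrepancies
\[
a(E_i,Y)=i(n-2)\ \ (1\le i\le \tfrac{p-1}{2}),\qquad a(E_N,Y)=\tfrac{p+1}{2}\,n-p,
\]
each of which is strictly positive for $n\ge 3$ (the minimum being $n-2\ge 1$); since $\sigma$ is a log resolution this proves $Y$ terminal, the final blow-up being exactly what makes the exceptional divisor simple normal crossing so that the log-resolution criterion applies. The case $p=2$ is the degenerate instance in which a single blow-up already resolves $Y$ and produces the lone discrepancy $n-2>0$. I expect the only real obstacle to be the bookkeeping of the two blow-up recursions -- in particular the verification that each successive center avoids all but the most recently created exceptional divisor -- rather than the adjunction step, which is formal.
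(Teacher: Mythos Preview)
Your proposal is correct and follows essentially the same approach as the paper: reduce to the local hypersurface model, compute in the ambient $(n+1)$-fold via the adjunction formula $K_{Y_N}=(K_{W_N}+Y_N)|_{Y_N}$, and track the two recursions $\alpha_i=in$ and $\beta_i=2i$ (with $\beta_{r+1}=p$) to obtain the coefficients $i(n-2)$ for $1\le i\le r$ and $(r+1)n-p$ for the final divisor, all positive for $n\ge 3$. The paper simply records these coefficients without writing out the recursion; your added verification that each successive center lies only on the immediately preceding $F_{i-1}$ is exactly the bookkeeping that justifies the paper's formula.
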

\begin{proof}
This can be checked locally at each singularity of the form
\[
y^p+f_2(\xs)+f_3.
\]

First, when the characteristic of $k$ is 2 (with $n$ even), if $\sigma\cl Z\ra Y$ is the resolution of singularities and $E$ is the unique exceptional divisor it suffices to compute the coefficient $\alpha$ of $E$ in the equation:
\[
K_Z = \sigma^*(K_Y)+\alpha E =  \mut^*(K_X+Y)|_Z+(n-2)E.
\]
If $n\ge 3$ then $\alpha>0$.

When $p$ is odd, let $E_i\subset Z$ denote the strict transform of the exceptional divisor of the $i$th blow-up of $T$. Let $r=(p-1)/2$. Then it suffices to compute the coefficients $\alpha_i$ of $E_i$:
\begin{align*}
K_Z = & \sigma^*(K_Y) +\alpha_1E_1+\cdots+\alpha_{r}
E_{r}+\alpha_{r+1}E_{r+1}\\
= & \pi^*(K_\LL+Y)|_Z+(n-2)E_1+(2n-4)E_2+\cdots+(rn-2r)E_r+((r+1)n-p)E_{r+1}.
\end{align*}
This gives $\alpha_i = i(n-2)$ for $0\le i\le r$ and $\alpha_{r+1} = (r+1)n-p$. These are all positive for $n\ge 3$.
\end{proof}

\section{Computing the space of \texorpdfstring{$(n-1)$}{\texttwoinferior}-forms}

Again, assume $k$ has characteristic $p>0$. In this section we prove a slightly more general version of Theorem B. Specifically we consider the following situation:
\begin{enumerate}
\item $X$ is a smooth projective $k$-variety of dimension $n\ge 3$,
\item $\Lc$ is an effective line bundle, $s\in H^0(X,\Lc^{\otimes p})$ is a global section with nondegenerate critical points,
\item $Y\subset \LL\xrightarrow{\mu} X$ is the $p$-cyclic cover of $s$,
\item and assume that $-K_Y$ is ample (i.e. $Y$ is Fano).
\end{enumerate}
In Proposition 3.2, assuming that
\[
H^0(X,T_X\otimes\omega_X\otimes \Lc^{p-1})=0
\]
we show
\[
H^0(Y,\mybigwedge^{n-1}\Omega_{Z}) = H^0(X,\omega_X\otimes \Lc^p).
\]
As a consequence, we show the line bundle $\mu^*(\omega_X\otimes \Lc^p)|_Y$ is birationally equivariant on $Y$. Theorems A and B and Corollary C follow from results in \S1.

Let $Z\subset \Lt$ be the log-resolution of the cyclic cover as in \S2. Following Koll\'ar, consider the relative cotangent sequence for $\Lt/X$ restricted to $Z$ and the cotangent sequence for $Z\subset \Lt$. This gives rise to the diagram:
\[
\begin{tikzcd}
&&I/I^2\arrow[d,hook]\arrow[r]&\tau\arrow[d]&\\
0\arrow[r]&\mut^*\Omega_X|_Z\arrow[r]\arrow[d]&\Omega_{\Lt}|_{Z}\arrow[r,"\rho_1"]\arrow[d]&\Omega_{\Lt/X}|_{Z}\arrow[r]\arrow[d]&0\\
0\arrow[r]&Q\arrow[r]&\Omega_{Z}\arrow[r,"\rho_2"]&B\arrow[r]&0.
\end{tikzcd}
\]
Here, $\tau$ (resp. $B$) is the torsion (resp. torsion-free) part of $\Omega_{\Lt/X}|_Z$. To check that $\rho_1$ extends to a map $\rho_2$, it suffices to check that $I/I^2$ maps to $0$ in $B$ which can be done generically as these are both torsion-free. Generically, this follows from the fact that the $y$-derivative of the equation of the cyclic cover:
\[
y^p-s(x_1,\dots,x_n)=0
\]
vanishes (as we are in characteristic $p$). $Q$ is defined to be the kernel of $\rho_2$.

\begin{proposition} In the above setting:
\begin{enumerate}
\item The natural map
\[
\mybigwedge^{n-1}\Omega_Z\cong \Omega_Z^{\vee}\otimes \omega_Z\ra Q^\vee\otimes \omega_Z
\]
is surjective outside of codimension 2.
\item The kernel is isomorphic to $\det(Q)$.
\item If $H^0(X,T_X\otimes\omega_X\otimes \Lc^{p-1})=0$ then $H^0(Z,\mybigwedge^{n-1}\Omega_Z) \cong H^0(Z,\det(Q))$.
\end{enumerate}
\end{proposition}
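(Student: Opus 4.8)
The three parts all come from dualizing the bottom row $0\to Q\to\Omega_Z\xrightarrow{\rho_2}B\to0$. Since $Z$ is smooth of dimension $n$ and $B$ is the torsion-free quotient of the relative cotangent sheaf, $\Omega_Z$ is locally free of rank $n$, $B$ has rank $1$, and $Q$ has rank $n-1$. Applying $\Extc^{\bullet}(-,\omega_Z)$ and using the canonical isomorphism $\Omega_Z^{\vee}\otimes\omega_Z\cong\mybigwedge^{n-1}\Omega_Z$ gives the exact sequence
\[
0\to B^{\vee}\otimes\omega_Z\to\mybigwedge^{n-1}\Omega_Z\to Q^{\vee}\otimes\omega_Z\to\Extc^1(B,\omega_Z)\to\cdots.
\]
Because $B$ is a rank-one torsion-free sheaf on a smooth variety it is locally free away from a closed subset of codimension $\ge 2$, so $\Extc^1(B,\omega_Z)$ is supported in codimension $\ge 2$; this is exactly the surjectivity outside codimension $2$ asserted in (1). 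For (2) I identify the kernel $B^{\vee}\otimes\omega_Z$: since $B$ has rank one, $B^{\vee}\cong(\det B)^{-1}$, and taking determinants in the bottom row gives $\omega_Z=\det\Omega_Z\cong\det Q\otimes\det B$, whence $B^{\vee}\otimes\omega_Z\cong\det Q$. Thus (1) and (2) are the two ends of a single exact sequence.

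For (3), I take global sections of $0\to\det Q\to\mybigwedge^{n-1}\Omega_Z\to\Fc\to0$, where $\Fc\subseteq Q^{\vee}\otimes\omega_Z$ is the image of the middle map. Left exactness gives $0\to H^0(\det Q)\to H^0(\mybigwedge^{n-1}\Omega_Z)\to H^0(\Fc)$ with $H^0(\Fc)\hookrightarrow H^0(Q^{\vee}\otimes\omega_Z)$, so the isomorphism in (3) will follow once I show
\[
H^0(Z,\,Q^{\vee}\otimes\omega_Z)=0.
\]

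To control this group I read off $Q$ from the left column $\mut^{*}\Omega_X|_Z\to Q$ of the diagram together with the characteristic-$p$ identity $d(y^p-s)=-ds$. Away from the exceptional locus $E$ of $\sigma$ the torsion $\tau$ vanishes, the map $Z\to X$ is finite purely inseparable of degree $p$, and the conormal generator $-ds$ realizes $Q$ as the cokernel of $ds\colon\Lc^{-p}\to\Omega_X$ pulled back to $Z\setminus E$. Dualizing identifies $Q^{\vee}$ there with the pullback of $\Kc:=\ker(T_X\xrightarrow{ds}\Lc^{p})\subseteq T_X$. Since $\omega_Z$ restricts on $Z\setminus E$ to the pullback of $\omega_X\otimes\Lc^{p-1}$ (the canonical bundle of the cyclic cover), we get
\[
(Q^{\vee}\otimes\omega_Z)\big|_{Z\setminus E}\;\cong\;\mu^{*}\big(\Kc\otimes\omega_X\otimes\Lc^{p-1}\big),\qquad \Kc\otimes\omega_X\otimes\Lc^{p-1}\subseteq T_X\otimes\omega_X\otimes\Lc^{p-1}.
\]
The plan is to show that a section of $Q^{\vee}\otimes\omega_Z$ regular on all of $Z$ restricts to a section in the ``weight-zero'' (i.e. $\Oc_X$-isotypic) part of this pullback, which is a subspace of $H^0(X,T_X\otimes\omega_X\otimes\Lc^{p-1})$, and to invoke the hypothesis $H^0(X,T_X\otimes\omega_X\otimes\Lc^{p-1})=0$.

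The main obstacle is precisely the passage from $Z\setminus E$ back across $E$. Because $Z\to X$ has degree $p$, the grading $\mu_{*}\Oc_Y\cong\bigoplus_{i=0}^{p-1}\Lc^{-i}$ means the naive global sections of $\mu^{*}(\Kc\otimes\omega_X\otimes\Lc^{p-1})$ split into $p$ summands $H^0(X,\Kc\otimes\omega_X\otimes\Lc^{p-1-i})$, and only the summand $i=0$ is killed by the hypothesis. The content of the argument is therefore a local computation at each non-degenerate critical point of $s$ — equivalently along the exceptional divisors $E_j$, whose discrepancies were computed in Proposition~\ref{prop:termSing} — showing that extending a section regularly across $E$ forces the components in the weights $i\ge 1$ to vanish, exactly as in Koll\'ar's computation of the $(n-1)$-forms that simultaneously identifies $H^0(Z,\det Q)$ with the $i=0$ piece $H^0(X,\omega_X\otimes\Lc^{p})$. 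This weight-selection (together with the fact that $\sigma_{*}\omega_Z$ behaves as expected for the terminal singularities of Proposition~\ref{prop:termSing}, a point that needs care in characteristic $p$) is the crux; once it is in place, (3) follows.
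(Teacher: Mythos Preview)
Your treatment of (1) and (2) is fine and matches the paper: dualize the bottom row, use that $B$ is torsion-free of rank one (so locally free in codimension $\ge 2$), and identify the kernel $B^{\vee}\otimes\omega_Z\cong\det Q$ via $\omega_Z\cong\det Q\otimes\det B$.

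The gap is in (3). You correctly reduce to showing $H^0(Z,Q^{\vee}\otimes\omega_Z)=0$, but then you retreat to $Z\setminus E$, propose a ``weight-selection'' local computation along the exceptional divisors that you never carry out, and claim this is the crux. It is not, and the detour is unnecessary. Two observations dissolve the problem. First, the map $\mut^{*}\Omega_X|_Z\to Q$ in the left column of the diagram is generically surjective, so its dual
\[
Q^{\vee}\hookrightarrow (\mut^{*}\Omega_X|_Z)^{\vee}=\mut^{*}T_X|_Z
\]
is an honest inclusion on \emph{all} of $Z$ (the source is reflexive, hence torsion-free). Thus $H^0(Z,Q^{\vee}\otimes\omega_Z)\hookrightarrow H^0(Z,\mut^{*}T_X|_Z\otimes\omega_Z)$ globally, with no need to extend across $E$ by hand. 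Second, since $Y$ has terminal singularities (Proposition~\ref{prop:termSing}) one has $\sigma_{*}\omega_Z\cong\omega_Y$, and the projection formula gives
\[
H^0(Z,\mut^{*}T_X|_Z\otimes\omega_Z)=H^0(Y,\mu^{*}T_X|_Y\otimes\omega_Y)=\bigoplus_{i=0}^{p-1}H^0(X,T_X\otimes\omega_X\otimes\Lc^{i}).
\]
Here \emph{every} summand vanishes, not just $i=p-1$: the hypothesis kills $i=p-1$, and effectivity of $\Lc$ gives injections $\Lc^{i}\hookrightarrow\Lc^{p-1}$, hence $H^0(X,T_X\otimes\omega_X\otimes\Lc^{i})\hookrightarrow H^0(X,T_X\otimes\omega_X\otimes\Lc^{p-1})=0$ for $i<p-1$. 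So no local computation or weight-selection argument is needed; the missing ingredient in your write-up is simply the observation that $\Lc$ effective kills the remaining summands.
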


\begin{proof}
For (1), the map
\[
\Omega_Z^{\vee}\otimes \omega_Z\ra Q^\vee\otimes \omega_Z
\]
only fails to be surjective on the locus where $B$ is not locally free (which has codimension $\ge 2$ as $B$ is torsion free). Letting $A$ denote the kernel of the map above, we observe that $A$ is a second syzygy sheaf, so it is reflexive. $A$ is therefore a line bundle as it is rank 1.

Now that we know the kernel is a line bundle, (2) can be verified outside codimension 2 where the sequence
\begin{equation}
0\ra A \ra \Omega_Z^{\vee}\otimes \omega_Z\ra Q^\vee\otimes \omega_Z
\end{equation}
becomes exact. So we have
\begin{align*}
c_1(A)+c_1(Q^\vee\otimes \omega_Z) = c_1(\mybigwedge^{n-1}\Omega_Z),
\end{align*}
which gives $c_1(A) = c_1(Q)$, i.e.  $A \cong \det Q$.

It remains to check (3). By equation (3.1) and part (2) it suffices to show that $Q^\vee\otimes \omega_Z$ has no global sections. There is an inclusion:
\[
Q^\vee\otimes \omega_Z \subset (\mut^*T_X)|_Z\otimes \omega_Z,
\]
so it suffices to show
\[
H^0(Z,(\mut^*T_X)|_Z\otimes \omega_Z)=0
\]

As $Y$ has terminal singularities there is an exact sequence:
\[
0\ra \sigma^*\omega_Y\ra \omega_Z \ra \Oc_\Delta(\Delta)\ra 0
\]
(where $\Delta$ is an effective exceptional divisor in $Z$). Pushing forward to $Y$ shows $\omega\cong \sigma_*\omega_Z$ (as they are isomorphic outside of points, torsion free, and the first is a line bundle). Then by the projection formula:
\[
\mu^*T_X|_Y\otimes \omega_Y\ra \mu^*T_X|_Y\otimes \sigma_*(\omega_Z)
\]
is an isomorphism. Therefore, as pushforward preserves global sections it suffices to show:
\[
H^0(Y,\mu^*T_X|_Y\otimes \omega_Y)=0.
\]
Now $\omega_Y = \omega_X\otimes \Lc^{p-1}$. As $Y$ is a $p$-cyclic cover,
\[
\mu_*(\Oc_Y)\cong \bigoplus_{i=0}^{p-1} \Lc^{-i}.
\]
Pushing forward $\mu^*T_X|_Y \otimes \omega_Y$ gives:
\[
H^0(Y,\mu^*T_X|_Y\otimes \omega_Y) = \bigoplus_{i=0}^{p-1}H^0(X,T_X\otimes \omega_X\otimes \Lc^{i}).
\]
This vanishes by the assumptions that $H^0(X,T_X\otimes \omega_X\otimes \Lc^{p-1})=0$ (here we use that $\Lc$ is effective to show the other summands vanish).
\end{proof}

\begin{proposition}\label{prop:detQ} Assume
\begin{enumerate}
\item[(a)] the vanishing condition from Prop. 3.1(3) holds,
\item[(b)] $\omega_X\otimes \Lc^{p}$ is globally generated, and
\item[(c)] $Y$ is Fano (or that $H^{0}(Y, \omega_{Y}) = 0$).
\end{enumerate}
Then:
\[
\det(Q)\cong \mut^*(\omega_X\otimes \Lc^p)(\Delta)
\]
for some effective divisor $\Delta$ that is exceptional for the birational map $\sigma$, and
\begin{equation}\label{isomForms}
H^0(Z,\mybigwedge^{n-1}\Omega_Z) \cong H^0(Z,\det(Q)) \cong H^0(Z,\mut^*(\omega_X\otimes \Lc^p)|_Z)\cong H^0(X,\omega_X\otimes \Lc^p).
\end{equation}
\end{proposition}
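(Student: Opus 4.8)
The plan is to treat the three isomorphisms in \eqref{isomForms} separately. The first, $H^0(Z,\mybigwedge^{n-1}\Omega_Z)\cong H^0(Z,\det Q)$, is not new: it is exactly Proposition 3.1(3), whose hypothesis is precisely assumption (a). So the real work is (i) to identify $\det Q$ with $\mut^*(\omega_X\otimes\Lc^p)$ twisted by an effective $\sigma$-exceptional divisor, and (ii) to compute the global sections of this sheaf.

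For (i), I would compute first Chern classes on the smooth $n$-fold $Z$, using the large commutative diagram that defines $Q$. The vertical map $\Omega_{\Lt}|_Z\to\Omega_Z$ is surjective with kernel $I/I^2$ (cotangent sequence of the smooth divisor $Z\subset\Lt$), and $\Omega_{\Lt/X}|_Z\to B$ is surjective with kernel $\tau$ by definition; so the snake lemma identifies the kernel and cokernel of the induced map $\alpha\colon\mut^*\Omega_X|_Z\to Q$ with those of the top map $\delta\colon I/I^2\to\tau$. Since $I/I^2\cong\Oc_\Lt(-Z)|_Z$ is a line bundle, the four-term sequence $0\to\ker\delta\to I/I^2\to\tau\to\operatorname{coker}\delta\to 0$ gives $c_1(\ker\delta)-c_1(\operatorname{coker}\delta)=c_1(I/I^2)-c_1(\tau)$, and hence
\[
c_1(Q)=\mut^*\omega_X|_Z-c_1(I/I^2)+c_1(\tau).
\]
The essential point is that $-c_1(I/I^2)=c_1(\Oc_\Lt(Z))|_Z$, and because $Z$ is the strict transform of $Y\in|\mu^*\Lc^{p}|$ one has $\Oc_\Lt(Z)=\mut^*\Lc^{p}\otimes\Oc_\Lt(-\sum_i m_iE_i)$ for the multiplicities $m_i$ along the blow-ups of \S2. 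This is exactly where the twist by $\Lc^{p}$ enters, yielding $\det Q\cong\mut^*(\omega_X\otimes\Lc^p)(\Delta)$ with $\Delta=c_1(\tau)-\sum_i m_iE_i$ supported on the $\sigma$-exceptional locus.

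The hard part will be showing that $\Delta$ is \emph{effective}. Both $c_1(\tau)$ (the divisorial part of the torsion in $\Omega_{\Lt/X}|_Z$) and $\sum_i m_iE_i$ are supported on the exceptional divisors, so $\Delta$ is automatically $\sigma$-exceptional; effectivity, however, requires the explicit local model of the resolution from \S2. I would compute $\tau$ and the $m_i$ directly in the blow-up coordinates near each $E_i$ (equivalently, check that Koll\'ar's $(n-1)$-forms coming from sections of $\omega_X\otimes\Lc^p$ remain regular along every exceptional divisor), and verify $c_1(\tau)\ge\sum_i m_iE_i$ coefficient by coefficient, in parallel with the discrepancy bookkeeping of Proposition~\ref{prop:termSing}.

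Granting (i), the remaining isomorphisms are formal pushforwards. Since $\Delta$ is effective and $\sigma$-exceptional and $Y$ is normal (terminal by Proposition~\ref{prop:termSing}), one has $\sigma_*\Oc_Z(\Delta)=\Oc_Y$, so $H^0(Z,\det Q)=H^0(Z,\mut^*(\omega_X\otimes\Lc^p)|_Z)$; and since $\mut^*(-)|_Z=\sigma^*(\mu^*(-)|_Y)$ with $\sigma_*\Oc_Z=\Oc_Y$, this equals $H^0(Y,\mu^*(\omega_X\otimes\Lc^p)|_Y)$. Finally I would push forward along $\mu$ using $\mu_*\Oc_Y\cong\bigoplus_{i=0}^{p-1}\Lc^{-i}$ to obtain
\[
H^0(Y,\mu^*(\omega_X\otimes\Lc^p)|_Y)\cong\bigoplus_{i=0}^{p-1}H^0(X,\omega_X\otimes\Lc^{p-i}).
\]
The summands with $i\ge 1$ have $1\le p-i\le p-1$ and vanish: assumption (c) gives $H^0(Y,\omega_Y)=0$, and the identical pushforward (using $\omega_Y=\mu^*(\omega_X\otimes\Lc^{p-1})|_Y$) shows $H^0(Y,\omega_Y)=\bigoplus_{j=0}^{p-1}H^0(X,\omega_X\otimes\Lc^{j})$, forcing $H^0(X,\omega_X\otimes\Lc^{j})=0$ for $0\le j\le p-1$. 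Only the $i=0$ term survives, yielding $H^0(X,\omega_X\otimes\Lc^p)$ and completing \eqref{isomForms}.
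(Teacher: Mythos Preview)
Your argument is correct and the endgame (pushing forward along $\sigma$ and then $\mu$, using assumption (c) to kill the lower summands) is exactly what the paper does. The difference lies in how you identify $\det Q$ with $\mut^*(\omega_X\otimes\Lc^p)(\Delta)$ and, in particular, how you establish effectivity of $\Delta$.

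You compute $c_1(Q)$ from first principles via the snake lemma, obtaining $\Delta=c_1(\tau)-\sum m_iE_i$, and then propose to verify $\Delta\ge 0$ by a coefficient-by-coefficient local calculation in the blow-up charts. This works, but notice that you never invoke assumption (b). The paper uses (b) precisely to avoid that local bookkeeping: it cites Koll\'ar's injection $\mut^*(\omega_X\otimes\Lc^p)|_Z\hookrightarrow\mybigwedge^{n-1}\Omega_Z$ directly, and then observes that because the source is \emph{globally generated} and (by Prop.~3.1) every global section of $\mybigwedge^{n-1}\Omega_Z$ already lies in the sub-line-bundle $\det Q$, the injection must factor through $\det Q$. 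An inclusion of line bundles immediately yields $\det Q\cong\mut^*(\omega_X\otimes\Lc^p)(\Delta)$ with $\Delta$ effective, and one sees $\Delta$ is $\sigma$-exceptional since the two line bundles agree on the locus where $Z\to Y$ is an isomorphism.

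So your route trades hypothesis (b) for an explicit local computation of $c_1(\tau)$ versus the $m_i$; the paper's route trades that computation for the single citation of Koll\'ar's injection plus the global-generation hypothesis. Your parenthetical remark---that effectivity is ``equivalently'' the regularity of Koll\'ar's forms along the $E_i$---is essentially the observation that bridges the two approaches, and had you pushed on it you would have landed on the paper's shortcut.
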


\begin{proof}
By Koll\'ar's work (\cite[\S23]{Kollar95}) there is an injection:
\begin{equation}
\mut^*(\omega_X\otimes \Lc^p)|_Z \hookrightarrow \mybigwedge^{n-1}\Omega_Z.
\end{equation}
The line bundle $\mut^*(\omega_X\otimes \Lc^p)$ is globally generated, so it must land inside of $\det(Q)$. Hence, they fit into a short exact sequence:
\[
0\ra \mut^*(\omega_X\otimes \Lc^p)|_Z \ra \det(Q) \ra \Oc_\Delta(\Delta)\ra 0,
\]
for some effective divisor $\Delta$. It is not hard to see that $\det(Q)$ and $\mut^*(\omega_X\otimes \Lc^p)|_Z$ are isomorphic away from the exceptional divisors (from the definition of $Q$ and by how Koll\'ar defines the injection). So $\Delta$ is exceptional for $\sigma$.

Pushing forward along $\sigma$ gives a map on $Y$:
\[
\mu^*(\omega_X\otimes \Lc^p)|_Y \ra \sigma_*(\det(Q)).
\]
which is necessarily an isomorphism, as $\sigma_*(\det(Q))$ is torsion free and they are isomorphic away from points. It follows that
\[
H^0(Z,\det(Q))\cong H^0(Z,\mut^*(\omega_X\otimes \Lc^p)|_Z)\cong H^0(Y,\mu^*(\omega_X\otimes \Lc^p)|_Y).
\]
Lastly, we have:
\begin{equation}
\mu_*(\mu^*(\omega_X\otimes \Lc^p)|_Y) \cong \bigoplus_{i=0}^{p-1} \left(\omega_X \otimes \Lc^{p-i}\right).
\end{equation}
By the Fano assumption, $H^{0}(Y, \omega_{Y}) = 0$. We have $\omega_{Y} = \mu^{\ast}(\omega_{X} \otimes \Lc^{p-1})$, thus:
\[
\mu_{\ast}(\omega_{Y}) = \bigoplus_{i=0}^{p-1} \left(\omega_X \otimes \Lc^{p-1-i}\right)
\]
has no global sections. It follows that the only global sections on the right hand side of (3.4) come from $\omega_X \otimes \Lc^p$, giving:
\[
H^0(Y,\mu^*(\omega_X\otimes \Lc^p)|_Y)\cong H^0(X,\omega_X\otimes \Lc^p),
\]
which completes the proof.
\end{proof}

\begin{proof}[Proof of Theorem B]

We will check that in the setting of Theorem $B$, the assumptions of Proposition~\ref{prop:detQ} are satisfied. Let $e$ be a positive integer such that
\[
e+(p-1)d \leq n\le e+pd-3.
\]
Consider a hypersurface $X \subset \PP^{n+1}_{k}$ of degree $e$ and let $\Lc = \Oc_{X}(d)$ for some $d \geq 1$. We claim that
\[ H^{0}\left(X, T_{X} \otimes \omega_{X} \otimes \Lc^{p-1} \right) = H^{0}\left(X, T_{X}(e + (p-1)d - n - 2) \right) = 0. \]
The Euler sequence restricted to $X$
\[
0 \rightarrow \Oc_{X}(e+(p-1)d-n-2) \rightarrow \Oc_{X}^{\oplus(n+2)}(e+(p-1)d-n-1) \rightarrow T_{\PP^{n+1}}(e+(p-1)d-n-2)|_X \rightarrow 0
\]
can be used to show that $T_{\PP^{n+1}}(e+(p-1)d-n-2)|_X$ has no global sections. So the above vanishing follows from taking global sections for the inclusion of tangent bundles:
\[
T_{X}(e+(p-1)d-n-2) \hookrightarrow T_{\PP^{n+1}}(e+(p-1)d-n-2) \big|_{X}.
\]
Next, observe that
\[
\omega_{X} \otimes \Lc^{p} \cong \Oc_{X}(e+pd-n-2)
\]
so the inequality $e+pd-3\ge n$ implies that $\omega_{X} \otimes \Lc^{p}$ is globally generated.

By \cite[V.5.7.2]{Kollar96}, a general section $s\in H^0(X,\Oc_X(pd))$ has nondegenerate critical points, and $Y$ is Fano as $\omega_Y = \mu^*(\Oc_X(e+(p-1)d-n-2))|_Y.$ By Proposition~\ref{prop:detQ}, it follows that
\[ H^{0}(Z, \mybigwedge^{n-1}\Omega_{Z}) \cong H^{0}(X, \omega_{X}(pd)). \qedhere \]
\end{proof}

\begin{proof}[Proof of Corollary C]
By \S 2, $Y$ admits a log resolution $\mut \colon Z \rightarrow Y$ and there is an injection
\begin{equation}
\mut^*(\omega_X(pd))|_Z \hookrightarrow \mybigwedge^{n-1}\Omega_Z.
\end{equation}
which induces the isomorphism on global sections in \eqref{isomForms}. In particular, the image of the evaluation map
\[
H^{0}(Z, \mybigwedge^{n-1}\Omega_{Z}) \otimes \Oc_{Z} \rightarrow \mybigwedge^{n-1}\Omega_{Z}
\]
is precisely the line bundle
\[
\Lc := \mut^*(\omega_X(pd))|_Z,
\]
so by Proposition~\ref{globgenbir}(2) it is birationally equivariant on $Z$. Theorem~\ref{PropertiesBirThm}(3) shows that $\mu^{\ast}(\omega_X(pd))|_{Y}$ is birationally equivariant on $Y$. The map $\mu$ is purely inseparable and the sections of $\mu^{\ast}(\omega_X(pd))|_{Y}$ define a map
\[ Y \rightarrow X \subset \PP(H^{0}(Y, \mu^{\ast}(\omega_X(pd))|_{Y})^{\vee}). \]
Therefore, Theorem~\ref{PropertiesBirThm}(6)--(7) implies that $\Bir(Y) \cong \Aut(Y) \hookrightarrow \Aut(X)$.
\end{proof}

We are now ready to give:

\begin{proof}[Proof of Theorem A]

Let $X$ be a general degree $e\ge 3$ hypersurface over an algebraically closed field of characteristic $p>0$ and fix the line bundle $\Lc := \Oc_{X}(d)$. Assume (as in Corollary C) that $n\ge 3$ and
\[
e+(p-1)d\le n\le e+pd-3.
\]
Let $Y$ be a cyclic cover over a general section of $H^0(X,\Oc_X(pd))$. By Corollary C,
\[
\Bir(Y) \hookrightarrow \Aut(X).
\]
By work of Matsumura and Monsky \cite{MM63} (see \cite[Cor. 1.9]{Poonen05} for a more modern treatment), we may assume that $\Aut(X) = \{ 1 \}$. The index of such a $Y$ is $n+2-e-(p-1)d$. For appropriate choices of $e$ and $n$ this can be made arbitrarily large. For example, $p=2$, $e=3$ and $d=3$ give index 2 examples. When $p=2$, $e=3$, $d=4$ and $n=8$ there are index 3 examples.
\end{proof}

\bibliographystyle{siam}
\bibliography{Biblio.bib}

\footnotesize{
\textsc{Department of Mathematics, Harvard University, Cambridge, Massachusetts 02138} \\
\indent \textit{E-mail address:} \href{mailto:nathanchen@math.harvard.edu}{nathanchen@math.harvard.edu}

\textsc{Department of Mathematics, University of Michigan, Ann Arbor, Michigan 48109} \\
\indent \textit{E-mail address:} \href{mailto:dajost@umich.edu}{dajost@umich.edu}
}

\end{document}